\renewcommand{\tt}[1]{\normalfont\texttt{#1}}
\newcommand{\rev}[1]{\overline{#1}}
\newcommand{\Gam}{\mathbf{\Gamma}}
\newcommand{\Bet}{\mathbf{B}}
\newtheorem{theorem}{Theorem}[section]
\newtheorem{lemma}[theorem]{Lemma}
\newtheorem{proposition}[theorem]{Proposition}
\theoremstyle{definition}
\newtheorem{problem}[theorem]{Problem}
\title{Avoiding abelian and additive powers in rich words}
\author{Jonathan Andrade}
\affil{\small{Department of Mathematics and Statistics\\
University of Victoria, Victoria, BC, Canada\\
\texttt{\href{mailto:jonathanandrade@uvic.ca}{jonathanandrade@uvic.ca}}}}
\author{Lucas Mol\footnote{Research of Lucas Mol is supported by NSERC grant RGPIN-2021-04084.}}
\affil{\small{Department of Mathematics and Statistics\\
Thompson Rivers University, Kamloops, BC, Canada\\
\texttt{\href{mailto:lmol@tru.ca}{lmol@tru.ca}}}}
\date{February 2025}
\begin{document}

\maketitle

\begin{abstract}
    This paper concerns the avoidability of abelian and additive powers in infinite rich words.  In particular, we construct an infinite additive $5$-power-free rich word over $\{\tt{0},\tt{1}\}$ and an infinite additive $4$-power-free rich word over $\{\tt{0},\tt{1},\tt{2}\}$.  The alphabet sizes are as small as possible in both cases, even for abelian powers.
\end{abstract}

\section{Introduction}
\label{Section:Introduction}

This paper concerns the avoidability of certain types of repetitions in words.  
We consider words over alphabets where it makes sense to sum the letters.  Over such an alphabet, the \emph{sum} of a word $w$ is simply the sum of the letters that make up $w$.  We focus on alphabets that are subsets of the integers with the usual sum.

We consider repetitions of three different forms.
For an integer $k\geq 2$, a word $w$ that can be written in the form $w=w_1w_2\cdots w_{k}$, where
\begin{itemize}
    \item $w_i$ is equal to $w_1$ for all $i\in\{2,\ldots,k\}$ is called an \emph{ordinary $k$-power};
    \item $w_i$ is an anagram of $w_1$ (i.e., $w_i$ can be obtained from $w_1$ by permuting the order of the letters) for all $i\in \{2,\ldots,k\}$ is called an \emph{abelian $k$-power};    
    \item $w_i$ has the same length and the same sum as $w_1$ for all $i\in\{2,\ldots,k\}$ is called an \emph{additive $k$-power}.
\end{itemize}
We often say \emph{square} instead of $2$-power, and \emph{cube} instead of $3$-power.

A word is \emph{ordinary} (resp.\ \emph{abelian}, \emph{additive}) \emph{$k$-power-free} if it contains no ordinary  (resp.\ abelian, additive) $k$-power as a factor.
Note that every ordinary $k$-power is an abelian $k$-power, and that every abelian $k$-power is an additive $k$-power.  
It follows that every additive $k$-power-free word is abelian $k$-power-free, and that every abelian $k$-power-free word is ordinary $k$-power-free.

Our motivating problem is the following.  Given a finite alphabet $X\subseteq \mathbb{Z}$, is there an infinite word over $X$ that is ordinary (resp.~abelian, additive) $k$-power-free?  
If so, then we say that ordinary (resp.~abelian, additive) $k$-powers are \emph{avoidable} over $X$.
If $|X|<2$, then it is easy to see that there is no such infinite word, so we may assume that $|X|\geq 2$.

Thue~\cite{Thue1912} solved this problem completely for ordinary $k$-powers by showing that there is an infinite ordinary cube-free binary word, and an infinite ordinary square-free ternary word.  (It is easy to show that there are only finitely many ordinary square-free words over a binary alphabet.)  In fact, Thue showed that there is a binary word that contains no \emph{overlap}, i.e., a word of the form $axaxa$, where $a$ is a single letter and $x$ is a (possibly empty) word.  Thue's results are considered the origin of the area of \emph{combinatorics on words}.

Erd\H{o}s~\cite{Erdos1961} was first to ask about the avoidability of abelian $k$-powers, and this variant of the problem has since been solved completely. 
Dekking~\cite{Dekking1979} constructed an infinite abelian $4$-power-free binary word, and an infinite abelian cube-free ternary word.  Ker\"anen~\cite{Keranen1992} constructed an infinite abelian square-free word over a four-letter alphabet. In all three cases, the alphabet size is as small as possible.

Finally, we summarize the progress on the problem for additive $k$-powers.  First note that over an integer alphabet of size two, words of the same length have the same sum if and only if they are anagrams of one another.  Thus, over an integer alphabet of size $2$, the notion of additive $k$-power is the same as the notion of abelian $k$-power.  Therefore, it follows from Dekking's result mentioned above that there is an infinite additive $4$-power-free word over every subset of $\mathbb{Z}$ of size at least two.
Cassaigne, Currie, Schaeffer, and Shallit~\cite{CassaigneCurrieSchaefferShallit2014} were the first to construct an infinite additive cube-free word over a finite subset of $\mathbb{Z}$ (namely $\{\tt{0},\tt{1},\tt{3},\tt{4}\}$).  Lietard and Rosenfeld~\cite{LietardRosenfeld2020} extended this result by showing that there is an infinite additive cube-free word over \emph{every} subset of $\mathbb{Z}$ of size $4$, except possibly those subsets equivalent to $\{\tt{0},\tt{1},\tt{2},\tt{3}\}$, i.e., subsets that are arithmetic progressions of length $4$.  It is still unknown whether there is an infinite additive cube-free word over $\{\tt{0},\tt{1},\tt{2},\tt{3}\}$.  Rao~\cite{Rao2015} constructed infinite additive cube-free words over several integer alphabets of size $3$. 
It is still unknown whether there exists an infinite additive square-free word over a finite subset of $\mathbb{Z}$.  However, Rao and Rosenfeld~\cite{RaoRosenfeld2018} constructed an infinite additive square-free word over a finite subset of $\mathbb{Z}^2$.

In this paper, we consider the avoidability of powers in \emph{rich} words. A word of length $n$ is called \emph{rich} if it contains $n+1$ distinct palindromes as factors, and an infinite word is called \emph{rich} if all of its finite factors are rich.  
Since their (implicit) introduction by Droubay, Justin, and Pirillo~\cite{DroubayJustinPirillo2001}, the language of rich words has been well-studied.
It is known that the set of infinite rich words contains several highly structured classes of words, including Sturmian words, episturmian words, and complementary symmetric Rote words (see~\cite{BlondinMasseEtAl2011,DroubayJustinPirillo2001}).

There are several papers concerning the avoidability of ordinary powers in rich words.  Pelantov\'{a} and Starosta~\cite{PelantovaStarosta2013} proved that every infinite rich word, over \emph{any} finite alphabet, contains an ordinary square. (In fact, they proved that every infinite rich word contains infinitely many distinct overlaps as factors.)  
On the other hand, it is known that there are infinite ordinary cube-free binary rich words; in fact, the results of Baranwal and Shallit~\cite{BaranwalShallit2019} and Currie, Mol, and Rampersad~\cite{CurrieMolRampersad2020} describe exactly which \emph{fractional} ordinary powers (between squares and cubes) are avoidable in binary rich words.

In this paper, we establish two results about the avoidability of abelian and additive powers in rich words.

\begin{theorem}\label{Theorem:Binary5PowerFree}
    There is an infinite additive $5$-power-free rich word over $\{\tt{0},\tt{1}\}$.
\end{theorem}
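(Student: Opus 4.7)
The plan is to exhibit a specific infinite binary word $w$, verify that it is rich, and verify that it contains no abelian $5$-power. Since over the alphabet $\{\tt{0},\tt{1}\}$ the notions of abelian and additive $k$-power coincide, this suffices to prove Theorem~\ref{Theorem:Binary5PowerFree}.

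The most natural candidates, the Sturmian words, are automatically rich but cannot be used directly: it is known that every Sturmian word contains abelian $k$-powers for every $k$. Concretely, the Fibonacci word $\tt{0100101001001010010100100101001001}\ldots$ already contains an abelian $5$-power at position $1$ with block length $5$, since the five consecutive blocks $\tt{10010}$, $\tt{10010}$, $\tt{01010}$, $\tt{01010}$, $\tt{01001}$ each contain exactly two $\tt{1}$s. Instead, I would take $w$ to be either a complementary symmetric Rote word (which is always rich and whose Parikh statistics are more constrained than those of the underlying Sturmian word), or a fixed point of a morphism chosen to preserve palindromic richness while pushing $w$ away from long abelian runs.

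With $w$ specified, richness follows either from the known fact that complementary symmetric Rote words are rich, or, in the morphic case, from the Glen--Justin characterization of rich words via iterated palindromic closures together with richness-preserving properties of the chosen morphism. The main step is then to show that $w$ avoids abelian $5$-powers. The strategy would be a minimal-counterexample / desubstitution argument: assuming a forbidden abelian $5$-power $u_1 u_2 u_3 u_4 u_5$ of block length $n$ in $w$, decompose each $u_i$ into images of shorter factors under the defining morphism (or via the nested palindromic-closure structure), and descend either to a shorter forbidden configuration in a related word, or to a bounded-length base case that can be checked directly. Parikh bookkeeping at the boundaries between consecutive $u_i$'s is crucial, since the block boundaries need not align with the images of the defining morphism.

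I expect this last step to be the main obstacle. Because binary rich words are strongly balanced in their Parikh statistics, they are pushed toward containing abelian powers, so ruling out every candidate abelian $5$-power requires exploiting the fine scale-by-scale structure of $w$. The bulk of the proof will lie in the careful Parikh analysis across scales, together with verifying that the desubstitution process actually terminates in a contradiction or a base case; any misalignment between block length and morphism scale must be absorbed by an explicit case analysis.
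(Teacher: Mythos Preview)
Your proposal is not a proof but a research outline: you never commit to a specific word $w$, and both the richness and the abelian $5$-power-freeness are left as intentions (``I would take $w$ to be either\ldots'', ``The strategy would be\ldots''). That is the genuine gap. The theorem stands or falls on exhibiting one concrete $w$ and verifying both properties for it; a desubstitution/template argument of the kind you sketch only works once the morphism is fixed and has the right structural properties (strictly growing, with Parikh/sum data that make the descent terminate). Without the morphism in hand there is nothing to check. Your first candidate class is also unlikely to succeed: complementary symmetric Rote words are built from Sturmian words by a letter-to-letter recoding of differences, and they inherit the same kind of balance behaviour that forces long abelian runs, so you should not expect them to be abelian $5$-power-free.

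For comparison, the paper does exactly what your second option gestures at, but concretely and with machine assistance. It fixes the $5$-uniform morphism $\beta(\tt{0})=\tt{00001}$, $\beta(\tt{1})=\tt{01101}$ and takes $\mathbf{B}=\beta^{\omega}(\tt{0})$. Richness is then a decidable first-order property of a $5$-automatic sequence and is discharged by \texttt{Walnut} via the unioccurrent-palindromic-suffix characterization. Additive $5$-power-freeness is decided by the algorithm of Currie--Mol--Rampersad--Shallit, which applies because $\beta$ is affine with matrix $\begin{pmatrix}5&0\\1&2\end{pmatrix}$ whose eigenvalues $2$ and $5$ both exceed $1$; this is precisely the hypothesis that makes a template/desubstitution descent terminate. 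Your observation that additive and abelian coincide over $\{\tt{0},\tt{1}\}$ is correct and is implicit in the paper as well, but it does not shortcut either verification.
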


\begin{theorem}\label{Theorem:Ternary4PowerFree}
    There is an infinite additive $4$-power-free rich word over $\{\tt{0},\tt{1},\tt{2}\}$.  
\end{theorem}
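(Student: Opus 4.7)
The plan is to exhibit an explicit infinite ternary word $\mathbf{w}$ over $\{\tt{0},\tt{1},\tt{2}\}$ that is simultaneously rich and additive $4$-power-free. Since Arnoux--Rauzy words and, more generally, standard episturmian words over three letters are all rich by the results of Droubay, Justin, and Pirillo~\cite{DroubayJustinPirillo2001}, these form a natural pool of candidates. I would first run a computer search over short directive sequences to locate a specific episturmian (or Arnoux--Rauzy) word whose long initial factor avoids additive $4$-powers. An equivalent route is to search for a primitive morphism $\phi$ on $\{\tt{0},\tt{1},\tt{2}\}$ whose fixed point is guaranteed to be rich---for instance, because $\phi$ is obtained by iterating palindromic closure and hence generates an episturmian word---and whose factors empirically avoid additive $4$-powers.

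With a candidate $\mathbf{w}$ in hand, richness is guaranteed by the family it belongs to, and the main task becomes rigorously proving additive $4$-power-freeness. For each length $\ell$, one must rule out four consecutive factors of length $\ell$ with a common sum. The standard approach, used in \cite{CassaigneCurrieSchaefferShallit2014,LietardRosenfeld2020}, combines a direct computer verification of all factors up to some large length $N$ with a structural argument for longer factors. In the structural part, one exploits the fact that substitutive words have bounded abelian (hence additive) discrepancy: the sum of a factor of length $\ell$ lies within a bounded window around $\ell(\rho_1+2\rho_2)$, where $\rho_i$ is the frequency of letter $i$. A desubstitution argument then reduces a hypothetical additive $4$-power of length $4\ell$ in $\mathbf{w}$ to a shorter one of length approximately $4\ell/r$, where $r$ is the expansion factor of $\phi$, eventually terminating in the finitely verified regime.

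The main obstacle is precisely the additive verification. Unlike ordinary or abelian powers, additive power-freeness is not preserved under morphic images in any clean way, so one is forced either to produce a Walnut-style first-order verification (when $\mathbf{w}$ is automatic in an appropriate numeration system) or to implement a hands-on desubstitution argument that simultaneously tracks the allowed range of sum deviations across all four blocks of a putative additive $4$-power. Selecting the right candidate so that both richness and a successful additive descent hold is the delicate part: the richness half comes essentially for free by working inside a known rich family, but the discrepancy bounds and the finite-verification depth for the additive half are very sensitive to the concrete choice of $\mathbf{w}$. Once a suitable $\mathbf{w}$ is fixed, I expect the additive verification to follow the template of \cite{LietardRosenfeld2020}, adapted to a ternary alphabet and the exponent $4$.
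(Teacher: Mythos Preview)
What you have written is a research plan, not a proof. You never exhibit a specific word, you never verify anything, and both halves of your plan rest on unexecuted searches and unproven assumptions. In particular, you \emph{assume} that some episturmian or Arnoux--Rauzy word over $\{\tt{0},\tt{1},\tt{2}\}$ is additive $4$-power-free, but you give no evidence for this. Recall that over a ternary alphabet additive equivalence is strictly weaker than abelian equivalence (e.g., $\tt{02}$ and $\tt{11}$ have the same length and sum), so even a word known to avoid abelian $4$-powers, such as the Tribonacci word, is not automatically additive $4$-power-free. Your plan may well succeed, but as it stands there is no theorem here.

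The paper proceeds quite differently. It fixes the explicit morphism $\gamma$ with $\gamma(\tt{0})=\tt{2}$, $\gamma(\tt{1})=\tt{101}$, $\gamma(\tt{2})=\tt{10001}$, and works with its fixed point $\Gam=\gamma^\omega(\tt{1})$. This word is \emph{not} episturmian, so richness is not free; instead the paper proves richness by a direct induction on prefixes, using that $\gamma$ sends palindromes to palindromes and carefully tracking how unioccurrent palindromic suffixes behave under $\gamma$. For additive $4$-power-freeness, the paper applies the decision algorithm of Currie, Mol, Rampersad, and Shallit, which requires the morphism to be \emph{affine} (length and sum of $\gamma^2(x)$ are affine functions of $x$) with an invertible matrix whose eigenvalues exceed $1$ in modulus. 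Note that standard episturmian morphisms $L_a$ over three letters are \emph{not} affine in this sense, so even if your search produced a candidate, the machinery the paper uses would not apply to it; you would need the more delicate Rao--Rosenfeld template method or a genuinely ad hoc descent, neither of which you have carried out.
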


These results are best possible in the sense that there are only finitely many abelian $4$-power-free rich words over any binary alphabet, and only finitely many abelian cube-free rich words over any ternary alphabet.  Using a backtracking algorithm, we determined that the longest abelian $4$-power-free binary rich word has length $2411$, and that the longest abelian cube-free ternary rich word has length $180$. These long backtracking searches were made possible by the use of the data structure \tt{eerTree} of Rubinchik and Shur~\cite{RubinchikShur2018}.

The constructions that we use to prove Theorem~\ref{Theorem:Binary5PowerFree} and Theorem~\ref{Theorem:Ternary4PowerFree} are obtained by iterating morphisms.  Let $\beta:\{\tt{0},\tt{1}\}^{*} \rightarrow \{\tt{0},\tt{1}\}^{*}$ and $\gamma:\{\tt{0},\tt{1},\tt{2}\}^*\rightarrow\{\tt{0},\tt{1},\tt{2}\}^*$ be the morphisms defined as follows:
    \begin{align*}
        \beta(\tt{0}) &= \tt{00001} & \gamma(\tt{0}) &= \tt{2} \\
        \beta(\tt{1}) &= \tt{01101} & \gamma(\tt{1}) &= \tt{101}\\
         & &                          \gamma(\tt{2}) &= \tt{10001}
    \end{align*}
Let $\Bet=\beta^\omega(\tt{0})$ and $\Gam=\gamma^\omega(\tt{1})$.
We prove Theorem~\ref{Theorem:Binary5PowerFree} by showing that $\Bet$ is rich and additive $5$-power-free, and we prove Theorem~\ref{Theorem:Ternary4PowerFree} by showing that $\Gam$ is rich and additive $4$-power-free.  

In order to prove the additive power-freeness of $\Bet$ and $\Gam$, we implemented the algorithm of Currie, Mol, Rampersad, and Shallit~\cite{CurrieMolRampersadShallit2021}, which is capable of deciding whether words produced by iterating certain morphisms are additive $k$-power-free. Our implementation of the algorithm can be found at \url{https://github.com/lgmol/Additive-Powers-Decision-Algorithm}.

In order to prove that $\Bet$ and $\Gam$ are rich, we take two different approaches.  We prove that $\Bet$ is rich using the automatic theorem-proving software \tt{Walnut}.  We prove that $\Gam$ is rich more or less directly using a well-known characterization of rich words.  We note that the morphism $\gamma^2$ can be ``conjugated'' to a morphism in the class $P_{\text{ret}}$, which has been studied by several authors in the context of rich words~\cite{BalkovaPelantovaStarosta2011,DolcePelantova2022}.  While there is a simple criterion for deciding whether a word obtained by iterating a \emph{binary} morphism in class $P_{\text{ret}}$ is rich~\cite{DolcePelantova2022}, no such simple criterion is known for ternary morphisms.  Ultimately, we found it slightly easier to work with $\gamma$ than the corresponding morphism in class $P_{\text{ret}}$.

The layout of the remainder of the paper is as follows.  In Section~\ref{Section:Background}, we provide the necessary background.  In Section~\ref{Section:Binary}, we prove Theorem~\ref{Theorem:Binary5PowerFree}.  In Section~\ref{Section:Ternary}, we prove Theorem~\ref{Theorem:Ternary4PowerFree}.  Finally, in Section~\ref{Section:Conclusion}, we discuss some open problems related to our work.

\section{Background}
\label{Section:Background}

We provide only a very brief overview of the basic background material on combinatorics on words.  See the books of Lothaire~\cite{Lothaire1983,Lothaire2002,Lothaire2005} or the more recent book of Shallit~\cite{Shallit2022} for a more comprehensive treatment. 

\subsection{Words}
\label{Subsection:Words}

An \emph{alphabet} is a nonempty finite set of symbols, which we refer to as \emph{letters}.  In this paper, we focus on alphabets that are finite subsets of the integers. 
Let $X$ be such an alphabet.  A \emph{word} over $X$ is a finite or infinite sequence of letters from $X$. We let $X^{*}$ denote the set of all finite words over the alphabet $X$.  The \emph{length} of a finite word $w$, denoted by $|w|$, is the number of letters that make up $w$. We let $\varepsilon$ denote the \emph{empty word}, which is the unique word of length $0$.  The \emph{sum} of a finite word $w$, denoted by $\sum w$, is the sum of the letters that make up $w$.

For a finite word $x$ and a finite or infinite word $y$, the \emph{concatenation} of $x$ and $y$, denoted by $xy$, is the word consisting of all of the letters of $x$ followed by all of the letters of $y$.  Suppose that a finite or infinite word $w$ can be written in the form $w=xyz$, where $x$, $y$, and $z$ are possibly empty words.  Then the word $y$ is called a \emph{factor} of $w$, the word $x$ is called a \emph{prefix} of $w$, and the word $z$ is called a \emph{suffix} of $w$.  If $x$ and $z$ are nonempty, then $y$ is called an \emph{internal factor} of $w$.  If $yz$ is nonempty, then $x$ is called a \emph{proper prefix} of $w$, and if $xy$ is nonempty, then $z$ is called a \emph{proper suffix} of $w$.

\subsection{Morphisms}
\label{Subsection:Morphisms}

For alphabets $X$ and $Y$, a $\emph{morphism}$ from $X^{*}$ to $Y^{*}$ is a function $h: X^{*} \rightarrow Y^{*}$ that satisfies $h(uv) = h(u)h(v)$ for all words $u,v \in X^{*}$.  We say that $h$ is \emph{strictly growing} if $|h(a)|\geq 2$ for all $a\in X$.  For an integer $k \geq 1$, we say that $h$ is \emph{k-uniform} if $|h(a)| = k$ for all $a \in X$.  

Let $h:X^{*} \rightarrow X^{*}$ be a morphism. For all words $x\in X^*$, we define $h^{0}(x) = x$, and $h^n(x)=h(h^{n-1}(x))$ for all integers $n\geq 1$.  For a letter $a\in X$, we say that $h$ is \emph{prolongable} on $a$ if $h(a) = ax$ for some word $x \in X^{*}$ and $h^n(x)\neq \varepsilon$ for all $n\geq 0$. 
If $h$ is prolongable on $a$ with $h(a)=ax$, then it is easy to show that for every integer $n\geq 1$, we have
\[h^n(a) = axh(x)h^{2}(x)\cdots h^{n-1}(x).\]
Thus, each $h^n(a)$ is a prefix of the infinite word
\[
h^\omega(a)=axh(x)h^2(x)h^3(x)\cdots.
\]
Note that $h^\omega(a)$ is a fixed point of $h$, i.e., $h(h^\omega(a))=h^\omega(a)$, where the morphism $h$ is extended to infinite words in the natural way.

\subsection{Rich Words}
\label{Subsection:RichWords}

For a finite word $w=w_1w_2\cdots w_{n-1}w_n$, where the $w_i$ are letters, the \emph{reversal} of $w$, denoted $\rev{w}$, is the word obtained by writing the letters of $w$ in the opposite order, i.e.,
\[
\rev{w}=w_nw_{n-1}\cdots w_2w_1.
\]
A word $w$ is a \emph{palindrome} if $\rev{w}=w$, i.e., if $w$ reads the same backward as forward.  Some examples of palindromes in the English language include \tt{racecar}, \tt{level}, and \tt{kayak}. 

Droubay, Justin, and Pirillo~\cite{DroubayJustinPirillo2001} were first to observe that every word of length $n$ contains at most $n+1$ distinct palindromes (including the empty word) as factors.  This explains the eventual use of the term \emph{rich} for words of length $n$ containing $n+1$ distinct palindromes as factors; these words are ``rich'' in palindromes.

Let us briefly explain the idea behind Droubay, Justin, and Pirillo's observation.  A factor $u$ of a word $w$ is said to be \emph{unioccurrent} in $w$ if $u$ occurs exactly once in $w$. For example, in the word $\tt{011011}$, the factor $\tt{11011}$ is unioccurrent, while the factor $\tt{11}$ is not.  Observe that every finite word has at most one unioccurrent palindromic suffix (which must necessarily be its longest palindromic suffix). Further, for a finite word $w$ and a letter $a$, every palindromic factor of $wa$ that is not a factor of $w$ must be a unioccurrent palindromic suffix of $wa$.  So adding a letter to the end of a finite word can add at most one new palindromic factor to the word.  Since the empty word contains just one palindromic factor (namely $\varepsilon$ itself), it follows by a straightforward induction that every word of length $n$ contains at most $n+1$ distinct palindromic factors.

We will use the following basic results about rich words, which can be proven using simple variations on the idea from the previous paragraph.  These results were first proven by Droubay, Justin, and Pirillo~\cite{DroubayJustinPirillo2001}, but we state them using the updated terminology of Glen, Justin, Widmer, and Zamboni~\cite{GlenJustinWidmerZamboni2009}.

\begin{theorem}
\label{Theorem:FiniteUnioccurrent}
    A finite word $w$ is rich if and only if every prefix (resp.~suffix) of $w$ has a unioccurrent palindromic suffix (resp.~prefix).
\end{theorem}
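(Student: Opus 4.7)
The plan is to prove the prefix version by induction on $|w|$, relying on the observation sketched in the paragraph preceding the theorem; the suffix version then follows by applying the prefix version to $\rev{w}$, since $w$ is rich if and only if $\rev{w}$ is, and the palindromic suffixes of $\rev{w}$ are in bijection (via reversal) with the palindromic prefixes of $w$.

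First I would make precise the key lemma extracted from the excerpt: any finite word $u$ has \emph{at most} one unioccurrent palindromic suffix, which, if it exists, is the longest palindromic suffix $p$ of $u$. Indeed, if $s$ is any proper palindromic suffix of $p$, then because $p$ is a palindrome, $s = \rev{s}$ is also a prefix of $p$; this prefix occurrence of $s$ in $u$ is strictly earlier than the suffix occurrence, so $s$ has at least two occurrences in $u$. Combined with the remark in the excerpt that every palindromic factor of $wa$ not already a factor of $w$ must occur as a (necessarily unioccurrent) palindromic suffix of $wa$, this yields, for $p(w)$ denoting the number of distinct palindromic factors of $w$, the inequality $p(wa) - p(w) \in \{0, 1\}$, with equality if and only if $wa$ has a unioccurrent palindromic suffix.

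Then I would induct on $|w|$. For the base case $|w| = 0$, we have $p(\varepsilon) = 1 = |\varepsilon|+1$, so $\varepsilon$ is rich, and its only prefix $\varepsilon$ has unioccurrent palindromic suffix $\varepsilon$. For the inductive step, write $w = w'a$. Iterating the key inequality gives $p(w) \leq p(w') + 1 \leq (|w'|+1) + 1 = |w|+1$, so $w$ is rich if and only if both inequalities are equalities, i.e., if and only if $w'$ is rich and $w$ has a unioccurrent palindromic suffix. By the inductive hypothesis, $w'$ is rich if and only if every prefix of $w'$ has a unioccurrent palindromic suffix; combining this with the additional condition that $w$ itself has a unioccurrent palindromic suffix gives exactly the statement that every prefix of $w$ has a unioccurrent palindromic suffix, closing the induction.

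I do not anticipate a real obstacle: the whole argument is an induction whose engine is the one-new-palindrome-per-letter principle already outlined in the text. The only delicate micro-step is the ``at most one unioccurrent palindromic suffix'' claim, which is handled by the short reversal argument above; everything else is bookkeeping.
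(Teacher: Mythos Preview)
Your proposal is correct and is exactly the argument the paper has in mind: the paper does not give a full proof of this theorem (it cites Droubay, Justin, and Pirillo), but it states that the result ``can be proven using simple variations on the idea from the previous paragraph,'' and your induction on $|w|$ driven by the one-new-palindrome-per-letter observation is precisely that variation. The reduction of the suffix version to the prefix version via reversal is also standard and correct.
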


\begin{lemma}
\label{Lemma:FactorsOfRichWordsAreRich}
    If a finite word $w$ is rich, then every factor of $w$ is rich.
\end{lemma}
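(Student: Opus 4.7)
The plan is to reduce the general statement to two special cases: every prefix of a rich word is rich, and every suffix of a rich word is rich. Given these, any factor $f$ of a finite word $w$ can be written as a suffix of some prefix $p$ of $w$, so applying the prefix case to $w$ shows that $p$ is rich, and then applying the suffix case to $p$ shows that $f$ is rich.

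For the prefix case, I would run the same counting argument sketched in the paragraph just before Theorem~\ref{Theorem:FiniteUnioccurrent}, but in reverse. Suppose $w$ is rich with $|w|=n$, and let $p$ be a prefix of $w$ with $|p|=m$. Let $k$ denote the number of distinct palindromic factors of $p$. Every palindromic factor of $p$ is already a palindromic factor of $w$, and extending $p$ one letter at a time to obtain $w$ introduces at most one new palindromic factor per appended letter. Hence $w$ has at most $k+(n-m)$ distinct palindromic factors. Since $w$ is rich this count is exactly $n+1$, which forces $k\geq m+1$; combined with the trivial bound $k\leq m+1$, we get $k=m+1$, so $p$ is rich.

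For the suffix case, I would use a symmetry argument based on reversal. The map $v\mapsto\rev{v}$ sends palindromic factors of $v$ bijectively to palindromic factors of $\rev{v}$, so richness is preserved under reversal. Thus if $w$ is rich then $\rev{w}$ is rich, every prefix of $\rev{w}$ is rich by the previous paragraph, and reversing back shows that every suffix of $w$ is rich, as required.

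The only real step that needs care is the counting argument, but it rests directly on the ``at most one new palindrome per appended letter'' principle already exposited in the excerpt, so no deeper machinery is needed; in particular, Theorem~\ref{Theorem:FiniteUnioccurrent} itself need not be invoked. I expect the whole proof to be short, with the mild subtlety being simply that one must keep track of inequalities in both directions to pin down the palindromic factor count of the prefix exactly.
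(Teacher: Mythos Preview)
Your argument is correct. The paper does not actually give a self-contained proof of Lemma~\ref{Lemma:FactorsOfRichWordsAreRich}; it only remarks that the result ``can be proven using simple variations on the idea from the previous paragraph'' (the at-most-one-new-palindrome-per-appended-letter principle) and cites Droubay, Justin, and Pirillo for the original. Your proof is precisely such a variation: the counting inequality $n+1\leq k+(n-m)$ for the prefix case is exactly the hinted argument run in reverse, and the reversal trick for suffixes together with the factor $=$ suffix-of-a-prefix decomposition finishes it cleanly. So your approach matches the spirit of what the paper indicates, while supplying the details the paper omits.
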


\begin{theorem} 
\label{Theorem:InfiniteUnioccurrent}
    An infinite word $\mathbf{w}$ is rich if and only if every finite prefix of $\mathbf{w}$ has a unioccurrent palindromic suffix.
\end{theorem}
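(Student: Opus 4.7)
The statement is essentially a transfer of the finite characterization (Theorem~\ref{Theorem:FiniteUnioccurrent}) to the infinite setting, and the plan is to prove each direction separately using Theorem~\ref{Theorem:FiniteUnioccurrent} together with Lemma~\ref{Lemma:FactorsOfRichWordsAreRich}.

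For the forward direction, assume $\mathbf{w}$ is rich. By definition every finite factor of $\mathbf{w}$ is rich, so in particular every finite prefix $p$ of $\mathbf{w}$ is rich. Applying Theorem~\ref{Theorem:FiniteUnioccurrent} to $p$, every prefix of $p$ (and hence $p$ itself) has a unioccurrent palindromic suffix, which gives the required conclusion.

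For the backward direction, assume that every finite prefix of $\mathbf{w}$ has a unioccurrent palindromic suffix. Since every finite factor of $\mathbf{w}$ appears inside some finite prefix of $\mathbf{w}$, Lemma~\ref{Lemma:FactorsOfRichWordsAreRich} reduces the problem to showing that every finite prefix of $\mathbf{w}$ is rich. Fix such a prefix $p$; its prefixes are precisely the prefixes of $\mathbf{w}$ of length at most $|p|$, each of which has a unioccurrent palindromic suffix by hypothesis. Theorem~\ref{Theorem:FiniteUnioccurrent} then yields that $p$ is rich.

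There is no real obstacle here beyond unpacking the definitions; the only mild subtlety is the observation that factors of an infinite word are controlled by sufficiently long prefixes, which is what lets us invoke the finite characterization and reduce from factors to prefixes.
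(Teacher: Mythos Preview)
Your argument is correct. Note, however, that the paper does not actually supply a proof of this theorem: it is stated as a known result, attributed to Droubay, Justin, and Pirillo and restated in the terminology of Glen, Justin, Widmer, and Zamboni, and is used as a black box in the rest of the paper. So there is no ``paper's own proof'' to compare against.

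That said, your derivation is exactly the natural one and is essentially how the result is obtained in the literature: the forward direction is immediate from the definition of richness for infinite words plus Theorem~\ref{Theorem:FiniteUnioccurrent}, and the backward direction uses Theorem~\ref{Theorem:FiniteUnioccurrent} to get richness of every prefix and then Lemma~\ref{Lemma:FactorsOfRichWordsAreRich} to pass to arbitrary factors. There is nothing to add.
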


\subsection{A decision algorithm for additive \texorpdfstring{$k$}{k}-power-freeness}
\label{Subsection:DecisionAlgorithm}

In order to establish that $\Bet$ is additive $5$-power-free and $\Gam$ is additive $4$-power-free, we implemented the decision algorithm described by Currie, Mol, Rampersad, and Shallit~\cite{CurrieMolRampersadShallit2021}, which applies to fixed points of certain ``affine'' morphisms.  A morphism $f: X^{*} \rightarrow Y^{*}$ where $X,Y\subseteq \mathbb{Z}$ is called \emph{affine} if there exist $a,b,c,d\in\mathbb{Z}$ such that for all $x \in X$, we have $|f(x)| = a +bx$ and $\sum f(x) = c + dx$. In this case, we define the matrix
\[
M_{f} = 
\begin{bmatrix}
    a & b \\
    c & d
\end{bmatrix}.
\]

\begin{theorem}
[Currie, Mol, Rampersad, and Shallit~\cite{CurrieMolRampersadShallit2021}]\label{Theorem:DecisionAlgorithm}
    Let $f:X^{*} \rightarrow X^{*}$ be a strictly growing affine morphism such that $f$ is prolongable on $a$. If $M_{f}$ is invertible and every eigenvalue $\lambda$ of $M_{f}$ satisfies $|\lambda| > 1$, then it is possible to decide whether or not $f^{\omega}(a)$ is additive $k$-power-free.
\end{theorem}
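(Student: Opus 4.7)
The plan is to reduce ``$f^\omega(a)$ is additive $k$-power-free'' to a decidable logical question about an appropriate numeration system, exploiting the linear-algebraic structure imposed by the affine hypothesis.

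The crucial observation is that the defining conditions $|f(x)|=a+bx$ and $\sum f(x)=c+dx$ propagate from letters to arbitrary finite words $u$: a one-line computation gives
\[
\begin{pmatrix} |f(u)| \\ \sum f(u) \end{pmatrix} = M_f \begin{pmatrix} |u| \\ \sum u \end{pmatrix},
\]
so that $M_f^n$ governs the length and sum of $f^n(u)$. First I would set up the Dumont--Thomas numeration system for $f$, which represents each position $n$ of $f^\omega(a)$ as a finite sequence of digits recording, level by level, which block of an $f^j$-image contains position $n$ and the offset within it. Since $f$ is strictly growing this recursion terminates; the eigenvalue hypothesis $|\lambda|>1$ for every eigenvalue $\lambda$ of $M_f$ ensures that the resulting digit alphabet is finite and that the representation of $n$ has length $O(\log n)$, so the set of valid representations is a regular language.

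Next I would express the length and sum of any factor $w[i..j)$ of $f^\omega(a)$ as functions of the numeration representations of $i$ and $j$. By the displayed identity, these quantities decompose as sums of contributions drawn from the finite set $\{M_f^p (1,x)^T : p\geq 0,\ x\in X\}$, indexed by the digits of the representations. Invertibility of $M_f$ allows one to move back and forth between positions and length-sum encodings and guarantees that the encoding is unique. The statement that a factor $w_1 w_2 \cdots w_k$ is an additive $k$-power then becomes a conjunction of linear equalities in these encoded quantities, which translates to a first-order formula over the natural numbers equipped with addition and the numeration predicate. Applying a B\"uchi/Bruy\`ere-style decidability theorem for the resulting extended arithmetic yields decidability of the additive $k$-power-freeness question.

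The main obstacle, and where every hypothesis is really used, is the effective construction of the automata recognizing the numeration language and performing addition in it, together with a transducer that outputs the sum-of-letters of a prefix of $f^\omega(a)$ from the numeration representation of its length. In the uniform case these are classical base-$k$ automata and the verification reduces to a \tt{Walnut} computation; in the general non-uniform affine setting they must be built explicitly from $M_f$. Invertibility ensures that the length-sum map is a bijection at every scale, and $|\lambda|>1$ for every eigenvalue ensures that digit alphabets are finite and carries remain bounded, which together make the needed arithmetic predicates regular and the decision procedure effective.
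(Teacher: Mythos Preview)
The paper does not prove this theorem; it is quoted from~\cite{CurrieMolRampersadShallit2021}, and the paper only sketches the underlying algorithm. That sketch is entirely different from your proposal: it is the \emph{template method}. The idea there is that a sufficiently long additive $k$-power in $f^\omega(a)$ can be ``desubstituted'' through $f$ to a shorter factor that is close to an additive $k$-power in a sense made precise by a finite combinatorial object called a \emph{template}. Invertibility of $M_f$ lets one pull back the length--sum vector through the desubstitution, and the eigenvalue hypothesis $|\lambda|>1$ is used as a contraction estimate guaranteeing that iterated desubstitution visits only finitely many \emph{ancestor templates}. The algorithm then consists of an exhaustive bounded-length search: check that no short factor is an additive $k$-power and that no short factor realises any ancestor template.

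Your route via Dumont--Thomas numeration and a B\"uchi/Bruy\`ere-style decidability result is a genuinely different strategy, but as written it has a real gap. The sentence ``carries remain bounded, which together make the needed arithmetic predicates regular'' is carrying the entire argument and is not justified by the stated hypotheses. Regularity of addition in a non-standard numeration system typically rests on a Pisot-type condition on the dominant eigenvalue of the substitution; ``every eigenvalue of $M_f$ has modulus $>1$'' is neither that condition nor known to imply it, and in any case $M_f$ governs the length--sum pair rather than the block-length recursion that defines the Dumont--Thomas digits. More seriously, the predicate you ultimately need compares the \emph{sums} of two arbitrary factors, an unbounded integer parameter; you have not shown that the prefix-sum function is synchronised with your position numeration, only asserted that a transducer exists. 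Without an explicit construction of the addition and sum automata and a proof of their finiteness under exactly the hypotheses of the theorem, the appeal to a logical decision procedure is a plan rather than a proof. The template method avoids all of this by never representing sums symbolically at all: it only compares length--sum vectors of factors of bounded length, and the eigenvalue hypothesis is used for a shrinking argument, not for regularity of arithmetic.
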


Recall the morphisms $\beta$ and $\gamma$ defined in Section~\ref{Section:Introduction}, and let $\delta=\gamma^2$.  So we have
\begin{align*}
\beta(\tt{0}) &= \tt{00001} & \gamma(\tt{0})&=\tt{2} & \delta(\tt{0})&=\tt{10001}\\
\beta(\tt{1}) &= \tt{01101} & \gamma(\tt{1})&=\tt{101} & \delta(\tt{1})&=\tt{1012101}\\
& & \gamma(\tt{2})&=\tt{10001} & \delta(\tt{2})&=\tt{101222101}
\end{align*}
and $\Bet=\beta^\omega(\tt{0})$ and $\Gam=\gamma^\omega(\tt{1})=\delta^\omega(\tt{1})$.
First note that $\beta$ and $\delta$ are strictly growing, and that $\beta$ is prolongable on $\tt{0}$, while $\delta$ is prolongable on $\tt{1}$.  Further, for all $x\in\{\tt{0},\tt{1}\}$, we have $|\beta(x)|=5+0x$ and $\sum \beta(x)=1+2x$, so $\beta$ is affine, and we have 
\[
M_{\beta} = 
\begin{bmatrix}
    5 & 0 \\
    1 & 2
\end{bmatrix}.
\]
Similarly, for the morphism $\delta$, we have $|\delta(x)| = 5 + 2x$ and $\sum \delta(x) = 2 + 4x$ for all  $x\in \{\tt{0},\tt{1},\tt{2}\}$. Therefore, $\delta$ is affine, and we have 
\[
M_{\delta} = 
\begin{bmatrix}
    5 & 2 \\
    2 & 4
\end{bmatrix}.
\]
Note that $M_\beta$ is invertible with eigenvalues $2$ and $5$, and that $M_\delta$ is invertible with eigenvalues $\frac{9+\sqrt{17}}{2}$ and $\frac{9-\sqrt{17}}{2}$. Hence, the conditions of Theorem~\ref{Theorem:DecisionAlgorithm} are met, and we can use the decision algorithm of Currie, Mol, Rampersad, and Shallit to prove that $\Bet$ is additive $5$-power-free and $\Gam$ is additive $4$-power free.

The algorithm makes use of the \emph{template method}, which was originally developed by Currie and Rampersad~\cite{CurrieRampersad2012} to decide abelian $k$-power-freeness.  Rao and Rosenfeld~\cite{RaoRosenfeld2018} established a more general version of the method, and found a way to apply it to both abelian and additive powers. The algorithm which we implement has more restrictive conditions on the morphism and works only for additive powers, but it has the advantage of being easy to implement in general (without worrying about floating point error, for example) and relatively efficient compared to previous template method algorithms.

The essential idea behind the algorithm is that if $f^\omega(a)$ contains a sufficiently long additive $k$-power, then it must contain some short word that is not ``too far'' from an additive $k$-power.  We use objects called \emph{templates} to describe structures that are ``close to'' additive $k$-powers.  The conditions on $f$ ensure that if $f^\omega(a)$ contains a sufficiently long additive $k$-power, then it contains some short instance of one of only finitely many \emph{ancestor templates}.  So to prove that $f^\omega(a)$ is additive $k$-power-free, it suffices to complete the following three steps.
\begin{itemize}
    \item \textbf{Initial Check:} Check that there is no ``short'' additive $k$-power in $f^\omega(a)$.
    \item \textbf{Calculating Ancestors:} Find the set $\mathcal{A}$ of all ancestor templates.
    \item \textbf{Final Check:} Check that there is no ``short'' word in $f^\omega(a)$ that matches an ancestor template.
\end{itemize}
The exact length of the ``short'' factors that need to be checked depends on the integer $k$, the morphism $f$, and the set $\mathcal{A}$ of all ancestor templates.  For more details concerning the algorithm and its implementation, see the undergraduate Honour's thesis of the first author~\cite{Andrade2024}.

Finally, it is interesting to note that while the conditions on the morphism for the algorithm of Rao and Rosenfeld~\cite{RaoRosenfeld2018} seem less restrictive in general than those of the algorithm we implement, the moprhisms $\gamma$ and $\delta=\gamma^2$ do not satisfy the conditions of Rao and Rosenfeld.  In particular, the incidence matrix of the morphism $\gamma$, namely
\[
\begin{bmatrix}
    0 & 1 & 3\\
    0 & 2 & 2\\
    1 & 0 & 0
\end{bmatrix},
\]
has eigenvalue $1$.  Rao and Rosenfeld~\cite[Section 6.1]{RaoRosenfeld2018} stated that they did not know of such a morphism with an abelian $k$-power-free fixed point.

\section{An additive 5-power-free binary rich word}
\label{Section:Binary}

In this section, we prove Theorem~\ref{Theorem:Binary5PowerFree}.

\begin{proposition}\label{Proposition:Binary5PowerFree}
    The word $\Bet=\beta^\omega(\tt{0})$ is additive $5$-power-free.
\end{proposition}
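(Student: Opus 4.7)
The plan is to invoke the decision algorithm of Theorem~\ref{Theorem:DecisionAlgorithm} with the morphism $\beta$ and parameter $k=5$. All of the required hypotheses have already been verified in Section~\ref{Subsection:DecisionAlgorithm}: $\beta$ is strictly growing (in fact $5$-uniform), prolongable on $\tt{0}$, and affine with matrix $M_\beta$ that is invertible and has eigenvalues $2$ and $5$, both of absolute value greater than $1$. Consequently, the algorithm is guaranteed to terminate and to correctly decide whether $\Bet$ is additive $5$-power-free.

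To execute the algorithm, I would carry out the three steps listed in Section~\ref{Subsection:DecisionAlgorithm}. First, in the Initial Check, I would enumerate the factors of $\Bet$ up to a threshold length that is determined by $k$, by $\beta$, and by the ancestor set produced in the next step, and verify directly that none of them is an additive $5$-power. Second, I would compute the set $\mathcal{A}$ of ancestor templates by iterating the parent-template operation from the templates describing additive $5$-powers, pruning templates that cannot occur in $\Bet$, until the set stabilizes. Because both eigenvalues of $M_\beta$ exceed $1$ in absolute value, the parent operation contracts templates in the relevant norm, so this backward search must terminate in finitely many steps and yield a finite $\mathcal{A}$. Third, in the Final Check, I would verify that no factor of $\Bet$ below the resulting threshold matches any template in $\mathcal{A}$; together with the Initial Check, this rules out the existence of any additive $5$-power in $\Bet$.

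The routine parts of the argument are the verification of the hypotheses of Theorem~\ref{Theorem:DecisionAlgorithm} (done above) and the finite enumerations in the Initial and Final Checks. The main obstacle is the Calculating Ancestors step: although termination is guaranteed by the spectral condition on $M_\beta$, the parent templates could in principle proliferate before the contraction produced by $M_\beta^{-1}$ takes effect, so producing $\mathcal{A}$ explicitly requires a careful implementation of parent computation and pruning, together with enough memory and time to run it to completion. I would rely on the authors' implementation of the algorithm (available at the GitHub repository cited in the Introduction) to carry out this step and the subsequent Final Check on a sufficiently long prefix of $\Bet$.
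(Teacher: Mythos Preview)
Your proposal is correct and follows exactly the same approach as the paper: verify that $\beta$ satisfies the hypotheses of Theorem~\ref{Theorem:DecisionAlgorithm} (already done in Section~\ref{Subsection:DecisionAlgorithm}) and then run the decision algorithm, relying on the authors' implementation. The paper's own proof is in fact even terser, simply citing the algorithm run and the repository; your expanded description of the three steps merely recapitulates material from Section~\ref{Subsection:DecisionAlgorithm}.
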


\begin{proof}
    We verify that $\Bet$ is additive $5$-power-free by running the algorithm described in Section~\ref{Subsection:DecisionAlgorithm}.  Our implementation of the algorithm can be found at \url{https://github.com/lgmol/Additive-Powers-Decision-Algorithm}, and includes this specific run of the algorithm.
\end{proof}

\begin{proposition}\label{Proposition:BinaryRich}
    The word $\Bet$ is rich.
\end{proposition}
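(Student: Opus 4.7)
The plan is to verify richness of $\Bet$ using the automatic theorem-proving software \tt{Walnut}. Since $\beta$ is a $5$-uniform morphism, its fixed point $\Bet$ is a $5$-automatic sequence: there is a deterministic finite automaton with output (DFAO) that, on reading the base-$5$ representation of $n$, outputs the $n$th letter of $\Bet$. This DFAO is generated mechanically from the definition of $\beta$. Once \tt{Walnut} has access to it, the software can decide any first-order statement about $\Bet$ expressible in the structure $\langle \mathbb{N},+,<,\Bet\rangle$.

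I would apply the characterization in Theorem~\ref{Theorem:InfiniteUnioccurrent}: $\Bet$ is rich if and only if every nonempty finite prefix has a unioccurrent palindromic suffix. This choice (over the more general Theorem~\ref{Theorem:FiniteUnioccurrent}) keeps the predicate small, since we only need to quantify over prefixes of $\Bet$ rather than all factors. The statement is naturally built from three first-order primitives, each straightforward to express over $\langle \mathbb{N},+,<,\Bet\rangle$: factor equality $\Bet[i..i+\ell-1]=\Bet[j..j+\ell-1]$ (a universal quantifier over an offset $k<\ell$), the palindrome property ($\Bet[i+k]=\Bet[i+\ell-1-k]$ for all $k<\ell$), and unioccurrence of the factor starting at $i$ of length $\ell$ inside the prefix $\Bet[0..N-1]$ (no $j\neq i$ with $j+\ell\leq N$ yields the same factor). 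Combining these, the richness predicate asserts that for every $N\geq 1$ there exist $i$ and $\ell\geq 1$ with $i+\ell=N$ for which $\Bet[i..i+\ell-1]$ is both a palindrome and unioccurrent in $\Bet[0..N-1]$.

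Feeding this predicate together with the DFAO for $\Bet$ to \tt{Walnut} should produce the output \texttt{TRUE}, and the resulting transcript constitutes the proof. The main obstacle is expected to be computational rather than conceptual: the nested alternating quantifiers over factor equality can cause intermediate automata to grow rapidly, so in practice it may be necessary to break the predicate into a sequence of named intermediate automata, or to reorder quantifiers to control state space blowup. Morphisms as small as $\beta$ are well within \tt{Walnut}'s typical operating range, however, so I expect the verification to terminate without serious difficulty and to yield an unconditional proof of the proposition.
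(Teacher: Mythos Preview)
Your proposal is correct and matches the paper's proof essentially line for line: the paper also invokes Theorem~\ref{Theorem:InfiniteUnioccurrent}, builds the DFAO for $\Bet$ from the $5$-uniform morphism $\beta$, defines intermediate predicates for factor equality, occurrence, and palindromicity, and then has \tt{Walnut} verify the richness predicate, which returns \tt{TRUE}. Your remark about splitting the predicate into named intermediate automata is exactly what the paper does in practice.
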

 
\begin{proof}
Since the morphism $\beta$ is $5$-uniform, the word $\Bet$ is $5$-automatic, and we can use the automatic theorem-proving software \tt{Walnut}, which is capable of deciding statements written in a certain first-order logic about automatic sequences.  See the recent book of Shallit~\cite{Shallit2022} for a comprehensive guide to using \tt{Walnut}.

By Theorem~\ref{Theorem:InfiniteUnioccurrent}, it suffices to show that every finite prefix of $\Bet$ has a unioccurrent palindromic suffix.  We express this property in the first-order logic of \tt{Walnut} below.  Similar \tt{Walnut} commands were used by Baranwal and Shallit~\cite{BaranwalShallit2019} and Schaeffer and Shallit~\cite{SchaefferShallit2016} in proving that other infinite words are rich.  

{\footnotesize
\begin{verbatim}
    morphism b "0->00001 1->01101":
        # defines the morphism beta (using the letter b as a name)
\end{verbatim}
\begin{verbatim}
    promote B b:
        # defines a DFAO generating the fixed point B of b
\end{verbatim}
\begin{verbatim}
    def FactorEq "?msd_5 Ak (k<n)=>(B[i+k]=B[j+k])":
        # takes 3 parameters i,j,n and returns true if 
        # the length-n factors of B starting at indices i and j are equal
\end{verbatim}
\begin{verbatim}
    def Occurs "?msd_5 (m<= n) & ( Ek (k+m<=n) & $FactorEq(i,j+k,m))":   
        # takes 4 parameters i,j,m,n and returns true if 
        # the length-m factor of B starting at index i 
        # occurs in the length-n factor of B starting at index j
\end{verbatim}
\begin{verbatim}   
    def Palindrome "?msd_5 Aj,k ((k<n) & (j+k+1=n)) => (B[i+k]=B[i+j])":
        # takes 2 parameters i,n and returns true if 
        # the length-n factor of B starting at index i is a palindrome
\end{verbatim}
\begin{verbatim}        
    def BisRich  "?msd_5 An Ej $Palindrome(j,n-j) & ~$Occurs(j,0,n-j,n-1)":
        # returns true if every finite prefix of B has a 
        # unioccurrent palindromic suffix, i.e., if B is rich
\end{verbatim}
}
\noindent
When we run all of these commands, \tt{Walnut} returns \tt{TRUE}. 
\end{proof}

Theorem~\ref{Theorem:Binary5PowerFree} follows immediately from Proposition~\ref{Proposition:Binary5PowerFree} and Proposition~\ref{Proposition:BinaryRich}.

\section{An additive 4-power-free ternary rich word}
\label{Section:Ternary}

In this section, we prove Theorem~\ref{Theorem:Ternary4PowerFree}.  

\begin{proposition}\label{Proposition:Ternary4PowerFree}
    The word $\Gam=\gamma^\omega(\tt{1})$ is additive $4$-power-free.
\end{proposition}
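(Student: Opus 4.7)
The plan is to apply the decision algorithm described in Section~\ref{Subsection:DecisionAlgorithm} to the morphism $\delta=\gamma^2$ with $k=4$, exactly as in the proof of Proposition~\ref{Proposition:Binary5PowerFree}. Since $\Gam=\gamma^\omega(\tt{1})=\delta^\omega(\tt{1})$, it suffices to decide additive $4$-power-freeness for the fixed point of $\delta$. The hypotheses of Theorem~\ref{Theorem:DecisionAlgorithm} have already been verified in Section~\ref{Subsection:DecisionAlgorithm}: $\delta$ is strictly growing, prolongable on $\tt{1}$, and affine, with $M_\delta$ invertible and both eigenvalues $\tfrac{9\pm\sqrt{17}}{2}$ strictly greater than $1$ in absolute value. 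I note that the \tt{Walnut} approach used later for Proposition~\ref{Proposition:BinaryRich} is not a natural alternative here, since $\gamma$ is non-uniform and so $\Gam$ is not directly $k$-automatic for any $k$.

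The execution of the algorithm proceeds in three stages. First, an \textbf{initial check} verifies that $\Gam$ contains no additive $4$-power of length below the threshold determined by $\delta$ and $k$. Second, the \textbf{ancestor computation} builds the finite set $\mathcal{A}$ of ancestor templates by iterating the template-pullback procedure; the eigenvalue condition on $M_\delta$ is precisely what guarantees that this process terminates with a finite $\mathcal{A}$. Third, a \textbf{final check} confirms that no short factor of $\Gam$ realizes any template in $\mathcal{A}$, where ``short'' is an explicit bound depending on $k=4$, on $\delta$, and on $\mathcal{A}$ itself.

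The main obstacle is computational rather than conceptual. The template-pullback can produce a sizeable ancestor set, and both the initial and final checks require scanning a long prefix of $\Gam$ whose length is dictated by that set. Since the dominant eigenvalue of $M_\delta$ is $\tfrac{9+\sqrt{17}}{2}\approx 6.56$, larger than the analogous quantity for $\beta$, one should anticipate that the run for $\Gam$ will be heavier than the run for $\Bet$ used in Proposition~\ref{Proposition:Binary5PowerFree}; care is also needed to handle the ternary alphabet and the non-uniform block lengths of $\delta$ correctly during template manipulation and factor enumeration. Following the pattern of Proposition~\ref{Proposition:Binary5PowerFree}, I would execute the authors' implementation available at \url{https://github.com/lgmol/Additive-Powers-Decision-Algorithm}, instantiated with the morphism $\delta$ and the exponent $k=4$, and report that the algorithm certifies additive $4$-power-freeness of $\Gam$.
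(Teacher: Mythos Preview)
Your proposal is correct and follows essentially the same approach as the paper: apply the decision algorithm of Section~\ref{Subsection:DecisionAlgorithm} to the morphism $\delta=\gamma^2$ with $k=4$, noting that the hypotheses of Theorem~\ref{Theorem:DecisionAlgorithm} were already checked there, and report the successful run of the authors' implementation. Your additional commentary on the three stages and the expected computational cost is accurate but goes beyond what the paper itself records.
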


\begin{proof}
    We verify that $\Gam$ is additive $4$-power-free by running the algorithm described in Section~\ref{Subsection:DecisionAlgorithm} with the morphism $\delta=\gamma^2$.  Our implementation of the algorithm can be found at \url{https://github.com/lgmol/Additive-Powers-Decision-Algorithm}, and includes this specific run of the algorithm.
\end{proof}

In order to prove that $\Gam$ is rich, we start with the following basic lemma.

\begin{lemma}\label{Lemma:PalindromePreservation}
    If $u\in\{\tt{0},\tt{1},\tt{2}\}^*$ is a palindrome, then $\gamma(u)$ is a palindrome.
\end{lemma}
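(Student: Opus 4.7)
The plan is to exploit the fact that $\gamma$ maps each letter to a palindrome: $\gamma(\tt{0})=\tt{2}$, $\gamma(\tt{1})=\tt{101}$, and $\gamma(\tt{2})=\tt{10001}$ are all palindromes, as one checks by inspection. This is the crucial structural property that makes the lemma work; nothing more sophisticated is needed.

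The key observation is that for \emph{any} morphism $h$ whose letter images are all palindromes, reversal interacts nicely with the morphism: for every word $w=w_1w_2\cdots w_n\in\{\tt{0},\tt{1},\tt{2}\}^*$ we have
\[
\rev{h(w)} \;=\; \rev{h(w_n)}\,\rev{h(w_{n-1})}\cdots\rev{h(w_1)} \;=\; h(w_n)h(w_{n-1})\cdots h(w_1) \;=\; h(\rev{w}),
\]
where the middle equality uses that each $h(w_i)$ is a palindrome, and the outer equalities use the definition of reversal and the fact that $h$ is a morphism. A clean way to carry this out is by induction on $|w|$, with the base case $w=\varepsilon$ being trivial and the inductive step $w=au$ (with $a$ a letter) using $\rev{h(au)}=\rev{h(u)}\rev{h(a)}=h(\rev{u})h(a)=h(\rev{u}a)=h(\rev{au})$, again using that $h(a)$ is a palindrome.

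Given this identity for $\gamma$, the lemma follows in one line: if $u$ is a palindrome, then $\rev{\gamma(u)}=\gamma(\rev{u})=\gamma(u)$, so $\gamma(u)$ is a palindrome.

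There is no real obstacle here; the only thing to verify carefully is the palindromicity of the three images $\gamma(\tt{0})$, $\gamma(\tt{1})$, $\gamma(\tt{2})$, which is immediate. The short induction (or equivalent direct argument) above then does all the work.
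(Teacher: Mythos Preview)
Your proof is correct and rests on the same key observation as the paper's: each of $\gamma(\tt{0})$, $\gamma(\tt{1})$, $\gamma(\tt{2})$ is a palindrome. The paper inducts directly on the length of the palindrome $u$ (writing $u=awa$), whereas you first establish the slightly more general identity $\rev{\gamma(w)}=\gamma(\rev{w})$ for all $w$ and then specialize; this is a minor repackaging rather than a different approach.
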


\begin{proof}
    We proceed by induction on the length of $u$.  First consider the case that $|u|\leq 1$.  The statement holds trivially when $u=\varepsilon$, and it is straightforward to check that $\gamma(\tt{0})$, $\gamma(\tt{1})$, and $\gamma(\tt{2})$ are palindromes.  Now suppose for some integer $n\geq 2$ that $u$ has length $n$, and that for every palindrome $v\in\{\tt{0},\tt{1},\tt{2}\}^*$ of length less than $n$, the word $\gamma(v)$ is a palindrome.  Since $|u|\geq 2$, we can write
    \[
        u=awa
    \]
    for some letter $a\in\{\tt{0},\tt{1},\tt{2}\}$ and some palindrome $w\in\{\tt{0},\tt{1},\tt{2}\}^*$.  By the inductive hypothesis, the words $\gamma(w)$ and $\gamma(a)$ are both palindromes.  Thus, we have
    \begin{align*}
        \rev{\gamma(u)}&=\rev{\gamma(a)}\ \rev{\gamma(w)}\ \rev{\gamma(a)}=\gamma(a) \gamma(w) \gamma(a)=\gamma(u),
    \end{align*}
    and we conclude that $\gamma(u)$ is a palindrome.
\end{proof}

\begin{proposition}\label{Proposition:TernaryRich}
    The word $\Gam$ is rich.
\end{proposition}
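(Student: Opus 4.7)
The plan is to apply Theorem~\ref{Theorem:InfiniteUnioccurrent}, which reduces the claim to showing that every finite prefix of $\Gam$ has a unioccurrent palindromic suffix. I would proceed by strong induction on the length of the prefix, exploiting the self-similarity $\Gam = \gamma(\Gam)$; the base case is a finite check over the initial segment of $\Gam$, short enough to verify by direct computation.

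For the inductive step, I would write $\Gam = \gamma(c_0)\gamma(c_1)\gamma(c_2)\cdots$ with $c_0 c_1 c_2 \cdots = \Gam$. Any prefix $p$ of $\Gam$ then decomposes as $p = \gamma(q)\,r$, where $q$ is a strictly shorter prefix of $\Gam$ and $r$ is a proper prefix of $\gamma(c)$ for the letter $c$ such that $qc$ is a prefix of $\Gam$. Since $\gamma(\tt{0}) = \tt{2}$, $\gamma(\tt{1}) = \tt{101}$, and $\gamma(\tt{2}) = \tt{10001}$, the possible values of $r$ are only $\varepsilon, \tt{1}, \tt{10}, \tt{100}, \tt{1000}$, giving a small number of cases. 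In each, I would exhibit an explicit palindromic suffix $t$ of $p$. When $r = \varepsilon$, the inductive hypothesis gives a unioccurrent palindromic suffix $s$ of $q$, and by Lemma~\ref{Lemma:PalindromePreservation} the word $\gamma(s)$ is a palindromic suffix of $p = \gamma(q)$. When $r$ is nonempty, $r$ constitutes the first half of the palindromic block $\gamma(c)$, and I would construct $t$ by extending $r$ symmetrically backwards into $\gamma(q)$, using a palindromic suffix of $q$ supplied by the inductive hypothesis to mirror on the left.

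The main obstacle is proving that the chosen $t$ is unioccurrent in $p$, because a hypothetical earlier occurrence of $t$ need not respect the $\gamma$-block decomposition of $p$. To handle this, I would rely on a desubstitution argument: the letter $\tt{2}$ appears in $\Gam$ only through $\gamma(\tt{0}) = \tt{2}$, so its positions mark block boundaries, and every sufficiently long factor of $\Gam$ has a unique alignment with those boundaries. Combined with the palindromicity of $t$, this pins any second occurrence of $t$ in $p$ to a block-aligned position, which in turn forces a second occurrence of the underlying palindrome $s$ in $q$, contradicting the inductive hypothesis. I expect the most intricate subcases to be the nonempty values of $r$, particularly $r = \tt{1000}$, where $t$ straddles a block boundary and the alignment analysis must be carried out with additional care.
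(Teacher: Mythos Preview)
Your outline matches the paper's proof almost exactly: induction via Theorem~\ref{Theorem:InfiniteUnioccurrent}, the decomposition of the prefix as $\gamma(q)\,r$, a case split on $r \in \{\varepsilon,\tt{1},\tt{10},\tt{100},\tt{1000}\}$, and a palindromic suffix of the form $\overline{r}\,\gamma(s)\,r$ built from an inductively supplied palindrome $s$, with unioccurrence pushed back to the preimage. The one wrinkle you should anticipate is that $s$ is not always the unioccurrent palindromic suffix of $q$ itself: for $r \in \{\tt{100},\tt{1000}\}$ the paper instead uses the palindromic suffix of $qc$ (written $\tt{2}s\tt{2}$), while for $r \in \{\tt{1},\tt{10}\}$ with $q$ ending in $\tt{1}$ or $\tt{2}$ it uses the palindromic suffix of $q$ with its leading $\tt{1}$ stripped off, and then must argue separately that the letter preceding $s$ in $q$ is not $\tt{0}$ so that $\overline{r}$ genuinely sits to the left---so this last subcase, rather than $r = \tt{1000}$, is where the real care is needed.
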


\begin{proof}
By Theorem~\ref{Theorem:InfiniteUnioccurrent}, it suffices to show that every finite prefix of $\Gam$ has a unioccurrent palindromic suffix.  We proceed by induction on the length of the prefix.  For the base case, we check by computer that every prefix of $\Gam$ of length at most $14$ has a unioccurrent palindromic suffix.

    Now suppose for some integer $n\geq 15$ that every prefix of $\Gam$ of length less than $n$ has a unioccurrent palindromic suffix, and let $P$ be the prefix of $\Gam$ of length $n$.  Let $p$ be the longest prefix of $\Gam$ such that $P$ has $\gamma(p)$ as a prefix, and let $a\in\{\tt{0,1,2}\}$ be the unique letter such that $q=pa$ is a prefix of $\Gam$.  Then we can write 
    \[
        P=\gamma(p)r,
    \]
where $r$ is a proper prefix of $\gamma(a)$.  (See Figure~\ref{Figure:PrefixP}.)  Note that since $n\geq 15$, we have $|p|\geq 5$.
    
Since $\gamma$ is nonerasing and $|\gamma(\tt{1})|=3$, we have 
    \[
    |q|=|p|+1<|\gamma(p)|\leq n.
    \]  
Hence, by the inductive hypothesis, every prefix of $q$ has a unioccurrent palindromic suffix.  It follows by Theorem~\ref{Theorem:FiniteUnioccurrent} that $p$ and $q$ are rich.  Let $s$ be the unioccurrent palindromic suffix of $p$, and let $t$ be the unioccurrent palindromic suffix of $q$.  Since $|p|\geq 5$ and every letter appears in the prefix of $p$ of length $4$, both $s$ and $t$ must have length at least $2$.

    \begin{figure}
        \centering
        \begin{tikzpicture}
            \draw (0,0) rectangle node{$P$} (8,1);
            \draw (0,1) rectangle node{$\gamma(p)$} (7,2);
            \draw (7,1) rectangle node{$r$} (8,2);
            \draw (0,2) rectangle node{$\gamma(p)$} (7,3);
            \draw (7,2) rectangle node{$\gamma(a)$} (9,3);
            \draw (0,3) rectangle node{$\gamma(q)$} (9,4);
            \draw (0,4) -- (0,5) -- (10,5);
            \draw (9,4) -- (10,4);
            \node[right] at (10,4) {$\cdots$};
            \node[right] at (10,5) {$\cdots$};
            \node at (5,4.5) {$\Gam$};
        \end{tikzpicture}
        \caption{The prefix $P$ of $\Gam$}
        \label{Figure:PrefixP}
    \end{figure}
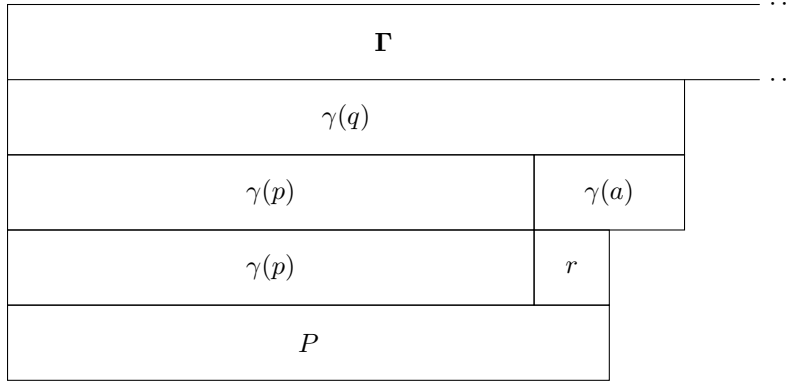

    \smallskip

    \noindent
    \textbf{Case 1:} Suppose that $r=\varepsilon$.  

    \smallskip
    
    \noindent
    We claim that 
    \[
        S=\gamma(s)
    \]
    is a unioccurrent palindromic suffix of $P$.  Since $s$ is a palindrome, it follows from Lemma~\ref{Lemma:PalindromePreservation} that $S$ is a palindrome.  Note that every occurrence of $S$ in $P$ corresponds to an occurrence of $s$ in $p$.  Thus, since $s$ is unioccurrent in $p$, we conclude that $S$ is unioccurrent in $P$.

    \smallskip
    
    \noindent
    \textbf{Case 2:} Suppose that $r\in\{\tt{100},\tt{1000}\}$.
    
	\smallskip  
    
    \noindent
    Since $r$ is only a prefix of the $\gamma$-image of the letter $\tt{2}$, we must have $a=\tt{2}$.  Write $t=\tt{2}t'\tt{2}$.  We claim that 
    \[
        S=\rev{r}\gamma(t')r
    \]
    is a unioccurrent palindromic suffix of $P$.  Since $t$ is a palindrome, the word $t'$ is also a palindrome, and it follows easily from Lemma~\ref{Lemma:PalindromePreservation} that $S$ is a palindrome.  Note that $\rev{r}$ is a suffix of $\gamma(\tt{2})$, so that $S$ is indeed a suffix of $P$.  Finally, since $r$ (resp.~$\rev{r}$) is only a prefix (resp.~suffix) of the $\gamma$-image of $\tt{2}$, we see that every occurrence of $S$ in $P$ corresponds to an occurrence of $t=\tt{2}t'\tt{2}$ in $q$.  Since $t$ is unioccurrent in $q$, we conclude that $S$ is unioccurrent in $P$.
    
%
   	
   	\smallskip

    \noindent 
    \textbf{Case 3:} Suppose that $r\in\{\tt{1},\tt{10}\}$.

    \smallskip
    \noindent
    Since $r$ is not a prefix of $\gamma(\tt{0})$, we must have $a\in\{\tt{1},\tt{2}\}$.  We have two subcases.

	\smallskip	
	
    \noindent
    \textbf{Case 3.1:} Suppose that $p$ ends in $\tt{0}$.

    \smallskip

    \noindent
	Since $\tt{02}$ is not a factor of $\Gam$, we must have $a=\tt{1}$.  Write $t=\tt{1}t'\tt{1}$.  We claim that
	\[
	S=\overline{r}\gamma(t')r
	\]  
	is a unioccurrent palindromic suffix of $P$.  The proof that $S$ is a palindromic suffix of $P$ can be completed as in Case 2.  Since $p$ ends in $\tt{0}$ and $t'$ is a palindromic suffix of $p$, we see that $t'$ begins and ends in $\tt{0}$.  Since $\tt{20}$ and $\tt{02}$ are not factors of $\Gam$, we see that every occurrence of $S$ in $P$ corresponds to an occurrence of $t=\tt{1}t'\tt{1}$ in $q$.  Since $t$ is unioccurrent in $q$, we conclude that $S$ is unioccurrent in $P$.
    
    \smallskip

    \noindent
    \textbf{Case 3.2:} Suppose that $p$ ends in $\tt{1}$ or $\tt{2}$.

    \smallskip

    \noindent 
    Write $p=\tt{1}p'$.  Since $p$ is rich, we see by Lemma~\ref{Lemma:FactorsOfRichWordsAreRich} that $p'$ is also rich.  Let $s'$ be the unioccurrent palindromic suffix of $p'$.  (Note that $s'$ only differs from $s$ in the case that $s=p$.)  Since $p'$ has prefix $\tt{0121}$, we must have $|s'|\geq 2$.
    
    We claim that 
    \[
        S=\rev{r}\gamma(s')r
    \]
    is a unioccurrent palindromic suffix of $P$.  Since $s'$ is a palindrome, we see by Lemma~\ref{Lemma:PalindromePreservation} that $\gamma(s')$ is a palindrome, and in turn that $S$ is a palindrome.  
    
    Next, we show that $S$ is a suffix of $P$.  First note that since $s'$ is a proper suffix of $p$, there is some letter $b$ such that $bs'$ is a suffix of $p$, and in turn $bs'a$ is a suffix of $q$. Suppose towards a contradiction that $b=\tt{0}$.  Then $bs'a=\tt{0}s'a$.  Since $p$ ends in $\tt{1}$ or $\tt{2}$ and $s'$ is a palindromic suffix of $p$, we see that $s'$ must begin and end in either $\tt{1}$ or $\tt{2}$.  Since $\tt{02}$ is not a factor of $\Gam$, we see that $s'$ must begin and end in $\tt{1}$.  Write $s'=\tt{1}s''\tt{1}$, so that $bs'a=\tt{01}s''\tt{11}$.  Since every occurrence of $\tt{01}$ in $\Gam$ is followed by $\tt{1}$ or $\tt{2}$, while every occurrence of $\tt{11}$ in $\Gam$ is preceded by $\tt{0}$, we see that $s''$ begins in $\tt{1}$ or $\tt{2}$ and ends in $\tt{0}$.  But this contradicts the fact that $s'$ is a palindrome.  So we have $b\in\{\tt{1},\tt{2}\}$, which means that $\rev{r}$ is a suffix of $\gamma(b)$, and it follows that $S$ is a suffix of $P$. 
    
    Finally, since every occurrence of $S$ in $P$ must arise from an occurrence of $s'$ in $p'$, and $s'$ is unioccurrent in $p'$, we conclude that $S$ is unioccurrent in $P$.
\end{proof}

Theorem~\ref{Theorem:Ternary4PowerFree} follows immediately from Proposition~\ref{Proposition:Ternary4PowerFree} and Proposition~\ref{Proposition:TernaryRich}.

\section{Open Problems}
\label{Section:Conclusion}

The first problem that we mention is analogous to the one considered by Lietard and Rosenfeld~\cite{LietardRosenfeld2020}.  

\begin{problem}
    Determine the subsets of $\mathbb{Z}$ over which there is an infinite additive $4$-power-free rich word.
\end{problem}

We have shown that there is an infinite additive $4$-power-free rich word over $\{\tt{0},\tt{1},\tt{2}\}$, and that there is no such word over any subset of $\mathbb{Z}$ of size $2$. It is possible that there is an infinite additive $4$-power-free rich word over \emph{every} subset of $\mathbb{Z}$ of size $3$.  In fact, based on computational evidence, the word $\Gam_{\tt{a,b,c}}$ obtained from $\Gam$ by applying the morphism defined by $\tt{0}\mapsto \tt{a}$, $\tt{1}\mapsto\tt{b}$, and $\tt{2}\mapsto \tt{c}$, appears to be additive $4$-power-free for many choices of distinct integers $\tt{a}$, $\tt{b}$, and $\tt{c}$.  Perhaps ideas similar to those of Lietard and Rosenfeld~\cite{LietardRosenfeld2020} could be applied here.

It is known that every sufficiently long rich word (over \emph{any} alphabet) contains an ordinary square~\cite{PelantovaStarosta2013}, and hence an additive square.  However, a positive answer to the following problem seems plausible to us.

\begin{problem}
    Is there an infinite additive cube-free rich word over some finite subset of $\mathbb{Z}$?
\end{problem}

\section*{Acknowledgements}

When searching for a ternary morphism with a rich and additive $4$-power-free fixed point, we originally found the morphism $\delta$, and did not notice that it is the square of the morphism $\gamma$.  We thank Pascal Ochem for alerting us to this fact, which simplified the proof of Proposition~\ref{Proposition:TernaryRich}.

\end{document}